\newtheorem{theorem}{Theorem}[section]
\newtheorem{lemma}[theorem]{Lemma}
\newtheorem{remark}{Remark}
\newcommand{\al}{\alpha}
\newcommand{\bt}{\beta}
\newcommand{\la}{\lambda}
\newcommand{\s}{\sigma}
\newcommand{\be}{\begin{equation}}
\newcommand{\ee}{\end{equation}}
\newcommand{\bea}{\begin{eqnarray}}
\newcommand{\eea}{\end{eqnarray}}
\newcommand{\no}{\nonumber}
\numberwithin{equation}{section}
\begin{document}

\title{\Large Differential and Difference Equations for Recurrence Coefficients of Orthogonal Polynomials with a Singularly Perturbed Laguerre-type Weight}
\author{Chao Min, Yuan Cheng and Yang Chen}


\date{\today}
\maketitle
\begin{abstract}
We are concerned with the monic orthogonal polynomials with respect to a singularly perturbed Laguerre-type weight. By using the ladder operator approach, we derive a complicated system of nonlinear second-order difference equations satisfied by the recurrence coefficients. This allows us to derive the large $n$ asymptotic expansions of the recurrence coefficients. In addition, we also obtain a system of differential-difference equations for the recurrence coefficients.
\end{abstract}

$\mathbf{Keywords}$: Orthogonal polynomials; Recurrence coefficients; Ladder operator approach;

Singularly perturbed Laguerre-type weight; Difference equations; Asymptotic expansions.

$\mathbf{Mathematics\:\: Subject\:\: Classification\:\: 2020}$: 42C05, 41A60.

\section{Introduction}
It is well known that the recurrence coefficients of semi-classical orthogonal polynomials are often related to the solutions of Painlev\'{e} equations. Two typical examples are given below. Chen and Its \cite{ChenIts} prove that the diagonal recurrence coefficient of the monic orthogonal polynomials with a singularly perturbed Laguerre weight
\be\label{w1}
w(x)=x^{\al}\mathrm{e}^{-x-\frac{s}{x}},\qquad x\in \mathbb{R}^{+},\;\al>0,\;s>0
\ee
satisfies a particular third Painlev\'{e} equation. Filipuk, Van Assche and Zhang \cite{Filipuk} (see also \cite{Clarkson}) establish the relationship between recurrence coefficients of the semi-classical Laguerre polynomials with the weight
\be\label{w2}
w(x)=x^{\al}\mathrm{e}^{-x^2+tx},\qquad x\in \mathbb{R}^{+},\;\al>-1,\; t\in \mathbb{R}
\ee
and the fourth Painlev\'{e} equation. For more examples, see \cite{Clarkson3,Dai,Magnus,Min2021,VanAssche} and the references therein.

In view of (\ref{w1}) and (\ref{w2}), it is natural to consider the following singularly perturbed Laguerre-type weight
\be\label{wei}
w(x;t):=x^\la\mathrm{e}^{-x^2-\frac{t}{x}},\qquad x\in \mathbb{R}^{+}
\ee
with $\la\geq 0,\;\;t>0$. This is also a semi-classical weight since it satisfies the Pearson equation (see, e.g., \cite[Section 1.1.1]{VanAssche})
$$
\frac{d}{dx}(\s(x)w(x))=\tau(x)w(x),
$$
with $\s(x)=x^2,\; \tau(x)=-2x^3+(\la+2)x+t$.

The question is whether or not the recurrence coefficients for the orthogonal polynomials with the weight (\ref{wei}) have any connection to the Painlev\'{e} equations. By using the ladder operator approach, we obtain a system of second-order difference equations and also a system of differential-difference equations satisfied by the recurrence coefficients. We find that this problem is very complicated compared with \cite{ChenIts,Filipuk} and can not detect any connection to the Painlev\'{e} equations. Hence, our study contributes to a better understanding about how to choose a suitable weight function to establish the relation between recurrence coefficients of the corresponding orthogonal polynomials and Painlev\'{e} equations as shown in \cite{CFR}.
\section{Preliminaries and Main Results}
Let $P_{n}(x;t),\; n=0,1,2,\ldots$ be the monic polynomials of degree $n$ orthogonal with respect to the weight (\ref{wei}), i.e.,
\be\label{or}
\int_{0}^{\infty}P_{m}(x;t)P_{n}(x;t)w(x;t)dx=h_{n}(t)\delta_{mn},\qquad m, n=0,1,2,\ldots.
\ee
Here $P_{n}(x;t)$ can be written in the expansion form
\be\label{expan}
P_{n}(x;t)=x^{n}+\mathrm{p}(n,t)x^{n-1}+\cdots+P_n(0;t),\qquad n=0,1,2,\ldots,
\ee
where $\mathrm{p}(n,t)$ denotes the coefficient of $x^{n-1}$ with the initial value $\mathrm{p}(0,t)=0$.

One of the most important properties of orthogonal polynomials is that they satisfy the three-term recurrence relation \cite{Szego}
\be\label{rr}
xP_{n}(x;t)=P_{n+1}(x;t)+\al_n(t)P_n(x;t)+\beta_{n}(t)P_{n-1}(x;t),
\ee
supplemented by the initial conditions $P_{0}(x;t)=1,\; \beta_{0}(t)P_{-1}(x;t)=0.$

It follows from (\ref{or}), (\ref{expan}) and (\ref{rr}) that the following relations hold:
\be\label{al}
\al_n(t)=\mathrm{p}(n,t)-\mathrm{p}(n+1,t),
\ee
\be\label{be}
\beta_{n}(t)=\frac{h_{n}(t)}{h_{n-1}(t)}.
\ee

The ladder operator approach is very useful and powerful to analyze the recurrence coefficients of orthogonal polynomials.
Following the general set-up by Chen and Ismail \cite{Chen1997,ChenIsmail2005} (see also Ismail \cite[Chapter 3]{Ismail}), we have a pair of ladder operators for our orthogonal polynomials (the $t$-dependence of many quantities will not be displayed for brevity from now on):
$$
\left(\frac{d}{dz}+B_{n}(z)\right)P_{n}(z)=\beta_{n}A_{n}(z)P_{n-1}(z),
$$
$$
\left(\frac{d}{dz}-B_{n}(z)-\mathrm{v}'(z)\right)P_{n-1}(z)=-A_{n-1}(z)P_{n}(z),
$$
where $\mathrm{v}(z):=-\ln w(z)$ is the potential and
\be\label{an}
A_{n}(z):=\frac{1}{h_{n}}\int_{0}^{\infty}\frac{\mathrm{v}'(z)-\mathrm{v}'(y)}{z-y}P_{n}^{2}(y)w(y)dy,
\ee
\be\label{bn}
B_{n}(z):=\frac{1}{h_{n-1}}\int_{0}^{\infty}\frac{\mathrm{v}'(z)-\mathrm{v}'(y)}{z-y}P_{n}(y)P_{n-1}(y)w(y)dy.
\ee
The associated compatibility conditions for $A_n(z)$ and $B_n(z)$ are
\be
B_{n+1}(z)+B_{n}(z)=(z-\al_n) A_{n}(z)-\mathrm{v}'(z), \tag{$S_{1}$}
\ee
\be
1+(z-\al_n)(B_{n+1}(z)-B_{n}(z))=\beta_{n+1}A_{n+1}(z)-\beta_{n}A_{n-1}(z), \tag{$S_{2}$}
\ee
\be
B_{n}^{2}(z)+\mathrm{v}'(z)B_{n}(z)+\sum_{j=0}^{n-1}A_{j}(z)=\beta_{n}A_{n}(z)A_{n-1}(z), \tag{$S_{2}'$}
\ee
where ($S_{2}'$) is obtained from the combination of ($S_{1}$) and ($S_{2}$) and usually gives better insight into the recurrence coefficients.

Our main results of this paper are as follows.
\begin{theorem}\label{thm}
The recurrence coefficients $\al_n$ and $\bt_n$ satisfy the following system of nonlinear \textbf{second}-order difference equations:
\begin{subequations}\label{de}
\begin{small}
\bea\label{d1}
&&\big[n t-2t\bt_n-2\al_n \beta_n(2\alpha_{n-1}^2+2 \beta_n+2 \beta_{n-1}-2 n+1-\la)
-2\alpha_{n-1}\beta_n (2 \alpha_n^2+2 \beta_n+2 \beta_{n+1} -2 n-1-\la)\big]\no\\
&&\times\big[(n+\la) t-2t\bt_n+2\al_n \beta_n(2\alpha_{n-1}^2+2 \beta_n+2 \beta_{n-1}-2 n+1-\la)
+2\alpha_{n-1}\beta_n (2 \alpha_n^2+2 \beta_n+2 \beta_{n+1} -2 n-1-\la)\big]\no\\
&&+\bt_n(2n+\la-4\bt_n)^2(2\alpha_{n-1}^2+2 \beta_n+2 \beta_{n-1}-2 n+1-\la)(2 \alpha_n^2+2 \beta_n+2 \beta_{n+1} -2 n-1-\la)=0,
\eea
\end{small}\\[-45pt]
\begin{small}
\bea\label{d2}
&&2\big[n t-2t\bt_n-2\al_n \beta_n(2\alpha_{n-1}^2+2 \beta_n+2 \beta_{n-1}-2 n+1-\la)
-2\alpha_{n-1}\beta_n (2 \alpha_n^2+2 \beta_n+2 \beta_{n+1} -2 n-1-\la)\big]\no\\
&&+(2n+\la-4\bt_n)\big[2\al_n^3-(2n+2+\la)\al_n-t+4\al_n\bt_{n+1}-2\al_{n-1}\bt_n+2\al_{n+1}\bt_{n+1}\big]=0.
\eea
\end{small}
\end{subequations}
\end{theorem}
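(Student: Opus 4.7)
The plan is to expand the ladder-operator data as Laurent series at $z=0$ and then read off coefficients. Since $\mathrm{v}(z)=-\la\ln z+z^2+t/z$, one has $\mathrm{v}'(z)=2z-\la/z-t/z^2$, and
$$
\frac{\mathrm{v}'(z)-\mathrm{v}'(y)}{z-y}=2+\frac{\la}{zy}+\frac{t(z+y)}{z^2y^2}.
$$
Substituting into (\ref{an})--(\ref{bn}) gives the rational forms
$$
A_n(z)=2+\frac{\la R_n+t r_n}{z}+\frac{t R_n}{z^2},\qquad B_n(z)=\frac{\la\tilde R_n+t\tilde r_n}{z}+\frac{t\tilde R_n}{z^2},
$$
where $R_n,r_n:=h_n^{-1}\!\int_0^\infty P_n^2(y)w(y)y^{-k}dy$ for $k=1,2$, and $\tilde R_n,\tilde r_n$ are the corresponding off-diagonal quantities built from $P_n(y)P_{n-1}(y)/h_{n-1}$. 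Thus the compatibility conditions become polynomial identities in $1/z$ whose coefficients must vanish separately.

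I would first match coefficients of $z^0$, $z^{-1}$, $z^{-2}$ in $(S_1)$, obtaining respectively $\la R_n+tr_n=2\al_n$, $\tilde r_{n+1}+\tilde r_n=R_n-\al_n r_n$, and $\tilde R_{n+1}+\tilde R_n=1-\al_n R_n$. Summing the constant part of $(S_2)$ from $0$ to $n-1$ telescopes to the trace identity $\la\tilde R_n+t\tilde r_n=2\bt_n-n$. Computing $t(\tilde r_{n+1}+\tilde r_n)$ by both available routes and eliminating $r_n$ yields the key Freud-type identity
$$
tR_n=2\al_n^2+2\bt_n+2\bt_{n+1}-2n-1-\la,
$$
which is exactly the factor recurring throughout (\ref{d1})--(\ref{d2}); the shift $n\mapsto n-1$ gives the companion factor $2\al_{n-1}^2+2\bt_n+2\bt_{n-1}-2n+1-\la$ also appearing there.

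For (\ref{d1}) I would match the $z^{-3}$ and $z^{-4}$ coefficients of $(S_2')$. The $z^{-4}$ residue gives the quadratic relation
$$
\tilde R_n(\tilde R_n-1)=\bt_n R_n R_{n-1},
$$
while $z^{-3}$ gives the linear relation
$$
\tilde R_n(4\bt_n-2n-\la)-(2\bt_n-n)=2\bt_n(\al_n R_{n-1}+\al_{n-1}R_n).
$$
Multiplying these by $t^2$ and $t$ respectively and replacing $tR_n$, $tR_{n-1}$ by their Freud-identity expressions turns them into a linear and a quadratic equation for the single unknown $t\tilde R_n$ whose coefficients are polynomial in $\al_{n\pm1},\al_n,\bt_{n\pm1},\bt_n$; eliminating $t\tilde R_n$ between them produces (\ref{d1}) directly after routine simplification.

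Equation (\ref{d2}) requires one further input: the $z^{-1}$ coefficient of $(S_2')$, which (after using $\la R_j+tr_j=2\al_j$) reads $t\tilde R_n+\sum_{j=0}^{n-1}\al_j=2\bt_n(\al_n+\al_{n-1})$. The partial sum is eliminated by writing $t(\tilde R_{n+1}+\tilde R_n)=t-\al_n\cdot tR_n$ (the $z^{-2}$ identity combined with the Freud identity) and combining this with the sum formula at levels $n$ and $n+1$; this yields $t\tilde R_n$ explicitly as a polynomial in $\al_{n\pm1},\al_n,\bt_n,\bt_{n+1},t$. Equating this closed form with the expression for $t\tilde R_n$ coming from the linear $(S_2')$ residue above gives (\ref{d2}). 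The main obstacle is purely algebraic bookkeeping: four auxiliary quantities $R_n,r_n,\tilde R_n,\tilde r_n$ together with the partial sum $\sum_{j=0}^{n-1}\al_j$ must be eliminated simultaneously, and it is precisely this coupled elimination — rather than any single conceptually hard step — that produces the complicated form of the final system and, as the authors note, obstructs a direct reduction to a Painlev\'e equation.
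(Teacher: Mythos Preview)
Your argument is correct and follows essentially the same path as the paper: express the two auxiliary quantities (your $tR_n$ and $t\tilde R_n$, the paper's $R_n(t)$ and $r_n(t)$) in terms of $\al_{n\pm1},\bt_{n\pm1}$ via the $z^{-1}$ residue of $(S_1)$ and the $z^{-3}$ residue of $(S_2')$, then substitute into the $z^{-4}$ residue of $(S_2')$ to get (\ref{d1}), and difference the $z^{-1}$ residue of $(S_2')$ at levels $n,n+1$ together with the $z^{-2}$ residue of $(S_1)$ to get (\ref{d2}). The paper phrases the last step via the sub-leading coefficient $\mathrm p(n,t)=-\sum_{j<n}\al_j$, but that is exactly your differencing of the sum formula.

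The one genuine difference is preliminary: the paper reduces to two auxiliaries at the outset by integration by parts (their Lemma, equations (\ref{e1})--(\ref{e2})), obtaining $A_n(z)=2+2\al_n/z+R_n(t)/z^2$ and $B_n(z)=(2\bt_n-n)/z+r_n(t)/z^2$ directly. You instead keep four auxiliaries $R_n,r_n,\tilde R_n,\tilde r_n$ and recover the same collapse purely from compatibility: $\la R_n+tr_n=2\al_n$ from the $z^0$ part of $(S_1)$, and $\la\tilde R_n+t\tilde r_n=2\bt_n-n$ by telescoping $(S_2)$. Both routes land on the identical reduced $A_n,B_n$, after which the derivations coincide; your version has the mild advantage of showing that the integration-by-parts identities are already encoded in $(S_1)$--$(S_2)$, while the paper's version is shorter and makes the two-auxiliary structure visible from the start.
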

\begin{theorem}\label{thm1}
The recurrence coefficients $\al_n$ and $\bt_n$ satisfy the system of differential-difference equations\\[-15pt]
\begin{subequations}\label{dd}
\be\label{dd1}
t\al_n'(t)=\al_n+2\bt_n(\al_n+\al_{n-1})-2\bt_{n+1}(\al_n+\al_{n+1}),
\ee
\be\label{dd2}
t\bt_n'(t)=2\bt_n(\al_{n-1}^2-\al_n^2+\bt_{n-1}-\bt_{n+1}+1).
\ee
\end{subequations}
\end{theorem}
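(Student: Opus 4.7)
My plan is to express the two required derivatives $t\alpha_n'$ and $t\beta_n'$ through the auxiliary quantities
$$R_n := \frac{1}{h_n}\int_0^\infty \frac{P_n^2(x)}{x}\,w(x;t)\,dx, \qquad r_n := \frac{1}{h_{n-1}}\int_0^\infty \frac{P_n(x)P_{n-1}(x)}{x}\,w(x;t)\,dx,$$
and then to evaluate $tR_n$ and $tr_n$ in closed form via integration by parts.

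First I would obtain the Toda-type identities. Since $\partial_t w = -w/x$ and $\partial_t P_n$ is a polynomial of degree $\le n-1$ (its leading coefficient is $1$), differentiating $h_n(t) = \int_0^\infty P_n^2 w\,dx$ under the integral sign yields $\partial_t h_n = -h_n R_n$, so that (\ref{be}) gives $t\beta_n'(t) = t\beta_n(R_{n-1} - R_n)$. Differentiating the orthogonality relation $\int_0^\infty P_n P_{n-1} w\,dx = 0$ with respect to $t$ and using the same degree count yields $\partial_t \mathrm{p}(n,t) = r_n$, and hence (\ref{al}) gives $t\alpha_n'(t) = t(r_n - r_{n+1})$.

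The core step is to identify $tR_n$ and $tr_n$ explicitly. I would do this by integration by parts, exploiting that $w(x;t)$ vanishes to infinite order at both endpoints $x=0^+$ (due to $\mathrm{e}^{-t/x}$ with $t>0$) and $x=+\infty$ (due to $\mathrm{e}^{-x^2}$), so that $[xP_n^2 w]_0^\infty = [xP_nP_{n-1}w]_0^\infty = 0$. With $\mathrm{v}'(x) = 2x - \lambda/x - t/x^2$, expanding $\int_0^\infty \partial_x[xP_n^2 w]\,dx = 0$ and using $\int x P_n P_n' w\,dx = n h_n$ together with $\int x^2 P_n^2 w\,dx = h_n(\alpha_n^2 + \beta_n + \beta_{n+1})$ (both immediate from (\ref{rr})) produces
$$tR_n = 2\alpha_n^2 + 2\beta_n + 2\beta_{n+1} - 2n - 1 - \lambda.$$
The analogous computation $\int_0^\infty \partial_x[xP_nP_{n-1}w]\,dx = 0$, using $\int xP_n'P_{n-1}w\,dx = -\mathrm{p}(n)h_{n-1}$ (from $xP_n' - nP_n = -\mathrm{p}(n)P_{n-1} + (\text{deg}\le n-2)$), $\int xP_nP_{n-1}'w\,dx = 0$, and $\int x^2 P_nP_{n-1}w\,dx = \beta_n(\alpha_n + \alpha_{n-1})h_{n-1}$ (expanding both $xP_n$ and $xP_{n-1}$ via the recurrence), delivers
$$tr_n = \mathrm{p}(n,t) + 2\beta_n(\alpha_n + \alpha_{n-1}).$$

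Substituting these closed-form expressions is then immediate: $tr_n - tr_{n+1}$ telescopes $\mathrm{p}(n) - \mathrm{p}(n+1)$ into $\alpha_n$ via (\ref{al}) and produces (\ref{dd1}), while $t(R_{n-1} - R_n) = 2(\alpha_{n-1}^2 - \alpha_n^2 + \beta_{n-1} - \beta_{n+1} + 1)$ produces (\ref{dd2}) after multiplication by $\beta_n$. The conceptual content is therefore the selection of the integrands $xP_n^2 w$ and $xP_nP_{n-1}w$; the only analytic subtlety, namely the vanishing of boundary terms, is immediate thanks to the essential singularity at $x=0^+$, and everything else is careful bookkeeping with the three-term recurrence. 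As a sanity check, the same formulas for $tR_n$ and $tr_n$ should emerge from the $z^{-1}$ coefficient of $(S_1)$ (combined with the $z^0$ coefficient of $(S_2')$) and from the $z^{-1}$ coefficient of $(S_2')$, respectively, which ties the argument back to the ladder-operator framework already in play for Theorem \ref{thm}.
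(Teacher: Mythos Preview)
Your proposal is correct and follows essentially the same approach as the paper: both proofs differentiate $h_n$ and the orthogonality relation $\int P_nP_{n-1}w\,dx=0$ to obtain the Toda-type identities $t(\ln h_n)'=-R_n$ and $t\,\mathrm{p}(n,t)'=r_n$ (with the paper's convention that $R_n,r_n$ already carry the factor $t$), and then substitute closed-form expressions for $R_n$ and $r_n$ in terms of the recurrence coefficients. The only methodological difference is that the paper imports these closed forms from the compatibility conditions already established in the proof of Theorem~\ref{thm} (namely (\ref{Rn}) from $(S_1)$ and (\ref{pn})/(\ref{al2}) from $(S_2')$), whereas you rederive them from scratch via integration by parts against $\partial_x[xP_n^2w]$ and $\partial_x[xP_nP_{n-1}w]$; as you yourself note, these are equivalent computations, so the two arguments coincide.
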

\begin{theorem}\label{thm2}
The recurrence coefficients $\al_n$ and $\bt_n$ have the following asymptotic expansions as $n\rightarrow\infty$:
$$
\al_n=\sqrt{\frac{2n}{3}}+\frac{2 \lambda+2 +3 \sqrt[3]{2t^2}}{4 \sqrt{6n}}-\frac{4 \sqrt[3]{2} (3 \lambda +2)+18(\lambda +1) (2t)^{2/3}+27 t^{4/3}}{96\sqrt{3}\times 2^{5/6}  n^{3/2}}+\frac{\lambda (\lambda ^2-1)}{72\times\sqrt[3]{4t}\:n^2}+O(n^{-5/2}),
$$\\[-30pt]
$$
\bt_n=\frac{n}{6}+\frac{\lambda }{12}-\frac{\lambda  \sqrt[3]{t}}{4\times2^{5/6} \sqrt{3n}}+\frac{1-3 \lambda ^2}{144 n}-\frac{\lambda  \left[2^{2/3} (\lambda ^2-1)-6 \lambda  t^{2/3}-9 \sqrt[3]{2}\: t^{4/3}\right]}{96\sqrt{3}\times2^{5/6}   \sqrt[3]{t}\:n^{3/2}}+O(n^{-2}).
$$
\end{theorem}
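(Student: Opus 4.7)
The plan is to substitute a formal asymptotic ansatz into the system of difference equations (\ref{de}) of Theorem \ref{thm} and to determine the coefficients recursively by matching powers of $n^{-1/2}$. Motivated by the form of the claimed expansion, I would write
\begin{equation*}
\alpha_n \sim a_{-1}\sqrt{n}+a_0+\frac{a_1}{\sqrt{n}}+\frac{a_2}{n}+\frac{a_3}{n^{3/2}}+\cdots,\qquad
\beta_n \sim b_{-2}n+b_{-1}\sqrt{n}+b_0+\frac{b_1}{\sqrt{n}}+\frac{b_2}{n}+\frac{b_3}{n^{3/2}}+\cdots,
\end{equation*}
treating the coefficients as unknowns and allowing the possibility that several of them vanish (as Theorem \ref{thm2} indicates for $a_0$, $a_2$, and $b_{-1}$).

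The first step is to compute the shifts $\alpha_{n\pm 1}$, $\beta_{n\pm 1}$ by applying the binomial series to each power of $n$ in the ansatz, $(n\pm 1)^r=n^r\sum_{k\ge 0}\binom{r}{k}(\pm n^{-1})^k$. Substituting these into (\ref{d1}) and (\ref{d2}) yields two formal series in $n^{-1/2}$ whose coefficients are polynomials in $t$, $\lambda$, and the $a_j, b_j$; setting each coefficient to zero gives the recursion. At the top order, a dominant balance in (\ref{d1}) together with (\ref{d2}) fixes $a_{-1}=\sqrt{2/3}$ and $b_{-2}=1/6$ (positive roots being selected because $\beta_n>0$) and forces $a_0=b_{-1}=0$. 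Proceeding downward, at each step one obtains a small (generically triangular) linear system for the next pair of unknowns, yielding successively $b_0=\lambda/12$, $a_1=(2\lambda+2+3\sqrt[3]{2t^2})/(4\sqrt{6})$, and so on, up to the orders stated in Theorem \ref{thm2}.

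The principal obstacle is the sheer algebraic complexity: equation (\ref{d1}) is a quartic in $(\alpha_{n-1},\alpha_n,\alpha_{n+1},\beta_{n-1},\beta_n,\beta_{n+1})$ with many parameters, so the order-by-order matching produces lengthy expressions that in practice must be handled with a computer algebra system. A secondary subtlety is the appearance of cube roots of $t$ (such as $\sqrt[3]{2t^2}$) in the coefficients, which means that the balance at a given power of $n$ involves a nontrivial, $t$-dependent correction to the Freud-like leading term; one must carry enough lower-order terms in the ansatz to see how products of those terms feed back into the chosen order. Finally, the procedure delivers the expansion \emph{formally}; to upgrade it to a genuine asymptotic statement one may argue inductively that the residual obtained by truncating the ansatz at order $n^{-k/2}$ and reinserting it into (\ref{de}) is consistent with a remainder of order $n^{-(k+1)/2}$, and cross-check the result against the differential-difference system (\ref{dd}) of Theorem \ref{thm1} by differentiating the truncated expansion in $t$.
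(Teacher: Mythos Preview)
Your approach is essentially the same as the paper's: substitute a half-integer power series ansatz for $\alpha_n$ and $\beta_n$ into the discrete system (\ref{de}) and read off the coefficients order by order. The one difference is that the paper does not extract the leading terms $\sqrt{2n/3}$ and $n/6$ from a dominant balance in (\ref{de}); instead it invokes Dyson's Coulomb fluid method (citing \cite{Min2021,Min2022}) to justify a priori the form of the expansion, and only then uses (\ref{de}) to fix the subleading coefficients $a_j,b_j$. Your self-contained dominant-balance argument is a reasonable substitute, though note that at strict top order the two equations do not immediately give two independent relations (one finds the single factor $a^2+2b-1$ appearing repeatedly), so pinning down both $a_{-1}$ and $b_{-2}$ requires combining the leading constraints of (\ref{d1}) and (\ref{d2}) and selecting the positive branch, exactly as you indicate.
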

\section{Proof of the Main Results}
We mainly follow what was developed in \cite{Chen1997,ChenIsmail2005,Ismail}. For our weight (\ref{wei}), we have
$$
\mathrm{v}(z)=-\ln w(z)=z^2+\frac{t}{z}-\lambda\ln z.
$$
It follows that
$$
\mathrm{v}'(z)=2z-\frac{\lambda}{z}-\frac{t}{z^2}
$$
and
\be\label{vp}
\frac{\mathrm{v}'(z)-\mathrm{v}'(y)}{z-y}
=2+\frac{\lambda}{zy}+\frac{t}{zy^2}+\frac{t}{z^2y}.
\ee
\begin{lemma}
We have
\be\label{an1}
A_{n}(z)=2+\frac{2\al_{n}}{z}+\frac{R_{n}(t)}{z^2},
\ee
\be\label{bn1}
B_{n}(z)=\frac{2\beta_{n}-n}{z}+\frac{r_{n}(t)}{z^2},
\ee
where $R_{n}(t)$ and $r_{n}(t)$ are the auxiliary quantities given by
$$
R_{n}(t):=\frac{t}{h_{n}}\int_{0}^{\infty}\frac{1}{y}P_{n}^{2}(y)w(y)dy,
$$
$$
r_{n}(t):=\frac{t}{h_{n-1}}\int_{0}^{\infty}\frac{1}{y}P_{n}(y)P_{n-1}(y)w(y)dy.
$$
\end{lemma}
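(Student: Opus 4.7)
My plan is to substitute the explicit expansion (\ref{vp}) of $(\mathrm{v}'(z)-\mathrm{v}'(y))/(z-y)$ directly into the defining formulas (\ref{an}) and (\ref{bn}), separate the four resulting pieces (the constants $1$, $1/(zy)$, $1/(zy^2)$, $1/(z^2 y)$), and evaluate each one using orthogonality, the three-term recurrence (\ref{rr}), and a single integration by parts. The form of (\ref{vp}) already predicts that $A_n$ is a polynomial in $1/z$ of degree at most two with constant term $2$ and that $B_n$ has no constant term, matching the claimed shape; the real work is to identify the coefficients.

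For $A_n(z)$ the four pieces split cleanly. The constant piece contributes $\frac{2}{h_n}\int_0^\infty P_n^2 w\,dy = 2$ by normalization, and the $t/(z^2 y)$ piece gives $R_n(t)/z^2$ directly from the definition of $R_n$. The coefficient of $1/z$ is $\frac{1}{h_n}\int_0^\infty (\lambda/y + t/y^2) P_n^2 w\,dy$, and the key observation is the algebraic identity $\lambda/y + t/y^2 = 2y - \mathrm{v}'(y)$. The $2y$ part evaluates to $2\alpha_n$ as the diagonal entry of the Jacobi matrix extracted from (\ref{rr}). For the $\mathrm{v}'(y)$ part I would use $\mathrm{v}'(y) w(y) = -w'(y)$ and integrate by parts; the boundary values $[w(y) P_n^2(y)]_0^\infty$ vanish at infinity because of $e^{-y^2}$ and at the origin because the factor $e^{-t/y}$ with $t>0$ forces $w(y)\to 0$ there. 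What survives is $\frac{2}{h_n}\int_0^\infty P_n P_n' w\,dy$, which vanishes by orthogonality since $\deg P_n' = n-1 < n$. Hence the $1/z$-coefficient is $2\alpha_n$, giving the stated formula for $A_n$.

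For $B_n(z)$ the computation is completely analogous. The constant piece is $\frac{2}{h_{n-1}}\int_0^\infty P_n P_{n-1} w\,dy = 0$ by orthogonality, and the $t/(z^2 y)$ piece is $r_n(t)/z^2$ by definition. The $1/z$-coefficient, $\frac{1}{h_{n-1}}\int_0^\infty (2y - \mathrm{v}'(y)) P_n P_{n-1} w\,dy$, splits into a $2y$-part equal to $2 h_n / h_{n-1} = 2\beta_n$ (using (\ref{rr}) together with (\ref{be})) and a $\mathrm{v}'(y)$-part, which after the same integration by parts reduces to $\frac{1}{h_{n-1}}\int_0^\infty (P_n P_{n-1})' w\,dy$. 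Here $\int P_n P_{n-1}' w\,dy = 0$ by orthogonality (since $\deg P_{n-1}' = n-2$), while $\int P_n' P_{n-1} w\,dy = n h_{n-1}$ because $P_n' = n P_{n-1} + (\text{lower order})$. The net contribution to the $1/z$-coefficient is $2\beta_n - n$, as claimed.

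The only point requiring genuine care, and what I view as the main subtlety, is the vanishing of the boundary terms at $y=0$ during the integration by parts. This is guaranteed precisely by the singular factor $e^{-t/y}$ in the weight (\ref{wei}), which is the feature distinguishing this weight from a generic Laguerre-type one and which is why the hypothesis $\lambda \geq 0$ (rather than the classical $\lambda > -1$) is sufficient for the argument to go through.
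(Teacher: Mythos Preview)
Your proof is correct and follows essentially the same route as the paper: substitute (\ref{vp}) into (\ref{an}) and (\ref{bn}), then identify the $1/z$-coefficient via integration by parts together with the recurrence relation. The paper states the result of this integration by parts directly as the identities (\ref{e1}) and (\ref{e2}), whereas you unpack it more explicitly via the observation $\lambda/y + t/y^2 = 2y - \mathrm{v}'(y)$ and the relation $\mathrm{v}'(y)w(y) = -w'(y)$; but this is the same computation, and your added remark about the vanishing of the boundary term at $y=0$ thanks to the factor $e^{-t/y}$ is a welcome clarification that the paper leaves implicit.
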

\begin{proof}
Substituting (\ref{vp}) into the definitions of $A_n(z)$ and $B_n(z)$ in (\ref{an}) and (\ref{bn}), we have
\be\label{an2}
A_n(z)=2+\left(\frac{\la}{zh_n}+\frac{t}{z^2h_n}\right)\int_{0}^{\infty}\frac{1}{y}P_{n}^{2}(y)w(y)dy
+\frac{t}{zh_n}\int_{0}^{\infty}\frac{1}{y^2}P_{n}^{2}(y)w(y)dy,
\ee
\be\label{bn2}
B_n(z)=\left(\frac{\la}{zh_{n-1}}+\frac{t}{z^2h_{n-1}}\right)\int_{0}^{\infty}\frac{1}{y}P_{n}(y)P_{n-1}(y)w(y)dy
+\frac{t}{zh_{n-1}}\int_{0}^{\infty}\frac{1}{y^2}P_{n}(y)P_{n-1}(y)w(y)dy.
\ee
Using integration by parts, we find
\be\label{e1}
\frac{\la}{h_n}\int_{0}^{\infty}\frac{1}{y}P_{n}^{2}(y)w(y)dy=2\al_n-\frac{t}{h_n}\int_{0}^{\infty}\frac{1}{y^2}P_{n}^{2}(y)w(y)dy,
\ee
\be\label{e2}
\frac{\la}{h_{n-1}}\int_{0}^{\infty}\frac{1}{y}P_{n}(y)P_{n-1}(y)w(y)dy=-n+2\bt_n-\frac{t}{h_{n-1}}\int_{0}^{\infty}\frac{1}{y^2}P_{n}(y)P_{n-1}(y)w(y)dy,
\ee
where use has been made of (\ref{rr}) and (\ref{be}). The lemma follows by inserting (\ref{e1}) and (\ref{e2}) into (\ref{an2}) and (\ref{bn2}), respectively.
\end{proof}
\begin{remark}
One will find that our expressions of $A_n(z)$ and $B_n(z)$ are very different from the results in \cite{ChenIts,Filipuk} due to the terms involving the recurrence coefficients $\al_n$ and $\bt_n$. This will lead to the fact that there are no simple relations between the recurrence coefficients $\al_n$ and $\bt_n$ and the auxiliary quantities $R_n(t)$ and $r_n(t)$ as shown in \cite{ChenIts,Filipuk} by using the compatibility conditions (see the following analysis in detail). In this case, we are not able to derive the second-order differential equations satisfied by the recurrence coefficients or the auxiliary quantities. As a consequence, the relations between our problem and the Painlev\'{e} equations are unclear.
\end{remark}
\begin{proof}[$\mathbf{Proof\; of\; Theorem\; \ref{thm}}$]
Substituting (\ref{an1}) and (\ref{bn1}) into ($S_{1}$) and comparing the coefficients of $\frac{1}{z}$ and $\frac{1}{z^{2}}$ on both sides, we find
\be\label{m1}
r_{n+1}(t)+r_{n}(t)=t-\alpha_{n}R_{n}(t),
\ee
\be\label{m2}
2\beta_{n+1}+2\beta_{n}=2n+1+\lambda+R_{n}(t)-2\alpha_{n}^2.
\ee
Similarly, substituting (\ref{an1}) and (\ref{bn1}) into ($S_{2}'$), we obtain
\be\label{s1}
r_{n}(t)\big(r_{n}(t)-t\big)=\beta_{n}R_{n}(t)R_{n-1}(t),
\ee
\be\label{s2}
(4\beta_{n}-2n-\la)r_{n}(t)+nt=2\beta_{n}\big(t+\al_{n}R_{n-1}(t)+\al_{n-1}R_{n}(t)\big),
\ee
\be\label{s3}
4\beta_{n}^2-2(2n+\la)\bt_n+n(n+\lambda)+\sum_{j=0}^{n-1}R_{j}(t)=2\beta_{n}\big(R_{n}(t)+R_{n-1}(t)+2\al_{n}\al_{n-1}\big),
\ee
\be\label{s4}
r_{n}(t)+\sum_{j=0}^{n-1}\al_j=2\beta_{n}(\al_{n}+\al_{n-1}).
\ee

From (\ref{m2}) we can express $R_n(t)$ in terms of the recurrence coefficients,
\be\label{Rn}
R_n(t)=2 \alpha_n^2+2 \beta_n+2\beta_{n+1}-2 n-1-\la.
\ee
Using (\ref{Rn}) to eliminate $R_n(t)$ and $R_{n-1}(t)$ in (\ref{s2}), we get the expression of $r_n(t)$ in terms of the recurrence coefficients,
\bea\label{rn}
r_n(t)&=&\frac{1}{2n+\la -4 \beta_n}\big[n t-2t\bt_n-2\al_n \beta_n(2\alpha_{n-1}^2+2 \beta_n+2 \beta_{n-1}-2 n+1-\la)\no\\
&-&2\alpha_{n-1}\beta_n (2 \alpha_n^2+2 \beta_n+2 \beta_{n+1} -2 n-1-\la)\big].
\eea
Substituting (\ref{Rn}) and (\ref{rn}) into (\ref{s1}), we obtain (\ref{d1}).

To proceed, note that we have the fact
$$
\sum_{j=0}^{n-1}\al_{j}(t)=-\mathrm{p}(n,t),
$$
which follows from (\ref{al}). So, the sub-leading coefficient $\mathrm{p}(n,t)$ can be expressed as the following form by (\ref{s4}),
\be\label{pn}
\mathrm{p}(n,t)=r_{n}(t)-2\beta_{n}(\al_{n}+\al_{n-1}).
\ee
Then we have
\bea\label{al2}
\al_n&=&\mathrm{p}(n,t)-\mathrm{p}(n+1,t)\no\\
&=&r_{n}(t)-r_{n+1}(t)-2\beta_{n}(\al_{n}+\al_{n-1})+2\beta_{n+1}(\al_{n+1}+\al_{n}).
\eea
Eliminating $r_{n+1}(t)$ from the combination of (\ref{m1}) and (\ref{al2}) gives
$$
2r_n(t)+\al_n\big(R_n(t)-1\big)-t-2\beta_{n}(\al_{n}+\al_{n-1})+2\beta_{n+1}(\al_{n+1}+\al_{n})=0.
$$
Inserting (\ref{Rn}) and (\ref{rn}) into the above, we obtain (\ref{d2}).
\end{proof}
\begin{remark}
If one substitutes (\ref{Rn}) and (\ref{rn}) into (\ref{m1}) directly, then the \textbf{third}-order difference equation for the recurrence coefficients will be obtained. In addition, one can derive another \textbf{third}-order difference equation for the recurrence coefficients by using (\ref{s3}).
\end{remark}
\begin{proof}[$\mathbf{Proof\; of\; Theorem\; \ref{thm1}}$]
From (\ref{or}) we have
$$
\int_{0}^{\infty}P_{n}^2(x;t)w(x;t)dx=h_{n}(t)
$$
and
$$
\int_{0}^{\infty}P_{n}(x;t)P_{n-1}(x;t)w(x;t)dx=0.
$$
Taking derivatives with respect to $t$ gives
$$
h_n'(t)=-\int_{0}^{\infty}\frac{1}{x}P_{n}^2(x;t)w(x;t)dx
$$
and
$$
\frac{d}{dt}\mathrm{p}(n,t)=\frac{1}{h_{n-1}(t)}\int_{0}^{\infty}\frac{1}{x}P_{n}(x;t)P_{n-1}(x;t)w(x;t)dx,
$$
respectively.
It follows that
\be\label{eq1}
t\frac{d}{dt}\ln h_n(t)=-R_n(t)
\ee
and
\be\label{eq2}
t\frac{d}{dt}\mathrm{p}(n,t)=r_n(t).
\ee

Combining (\ref{eq1}) with (\ref{be}), we have
$$
t\frac{d}{dt}\ln \bt_n(t)=R_{n-1}(t)-R_n(t).
$$
That is,
$$
t\bt_n'(t)=\bt_n(R_{n-1}(t)-R_n(t)).
$$
Substituting (\ref{Rn}) into the above gives (\ref{dd2}).
On the other hand, from (\ref{eq2}) and (\ref{al}) we find
\be\label{al4}
t\al_n'(t)=r_n(t)-r_{n+1}(t).
\ee
Eliminating $r_n(t)-r_{n+1}(t)$ from (\ref{al4}) and (\ref{al2}), we arrive at (\ref{dd1}). This completes the proof.
\end{proof}
\begin{proof}[$\mathbf{Proof\; of\; Theorem\; \ref{thm2}}$]
By using Dyson's Coulomb fluid approach and following similar discussions in \cite{Min2021,Min2022}, we find that the recurrence coefficients $\al_n$ and $\bt_n$ have the large $n$ expansion forms
\begin{subequations}\label{ab2}
\be\label{al3}
\al_n=\sqrt{\frac{2n}{3}}+\sum_{j=0}^{\infty}\frac{a_j}{n^{j/2}},
\ee
\be\label{be3}
\bt_n=\frac{n}{6}+\sum_{j=-1}^{\infty}\frac{b_j}{n^{j/2}}.
\ee
\end{subequations}
Substituting (\ref{ab2}) into the discrete system (\ref{de}), we obtain the expansion coefficients as $n$ gets large as follows:
\bea
&&a_0=0,\qquad a_1=\frac{2 \lambda+2 +3 \sqrt[3]{2t^2}}{4 \sqrt{6}},\qquad a_2=0,\qquad a_3=-\frac{4 \sqrt[3]{2} (3 \lambda +2)+18(\lambda +1) (2t)^{2/3}+27 t^{4/3}}{96\sqrt{3}\times 2^{5/6}},\no\\[8pt]
&&a_4=\frac{\lambda (\lambda ^2-1)}{72\times\sqrt[3]{4t}},\qquad b_{-1}=0,\qquad b_0=\frac{\lambda }{12},\qquad b_1=-\frac{\lambda  \sqrt[3]{t}}{4\sqrt{3}\times2^{5/6}},\qquad b_2=\frac{1-3 \lambda ^2}{144},\no\\[8pt]
&&b_3=-\frac{\lambda  \left[2^{2/3} (\lambda ^2-1)-6 \lambda  t^{2/3}-9 \sqrt[3]{2}\: t^{4/3}\right]}{96\sqrt{3}\times2^{5/6}   \sqrt[3]{t}}.
\eea
The theorem is then established.
\end{proof}
\begin{remark}
One can also derive the large $n$ asymptotic expansion for the sub-leading coefficient $\mathrm{p}(n,t)$ from (\ref{pn}) and (\ref{rn}) by using the results in Theorem \ref{thm2}.
\end{remark}

\section*{Acknowledgments}
The work of the first author was partially supported by the National Natural Science Foundation of China under grant number 12001212 and by the Fundamental Research Funds for the Central Universities under grant number ZQN-902. The work of the third author was partially supported by the Macau Science and Technology Development Fund under grant number FDCT 0079/2020/A2.



\vspace{58pt}
\noindent \textsc{School of Mathematical Sciences, Huaqiao University, Quanzhou 362021, China}\\

\noindent \textsc{Department of Mathematics, Faculty of Science and Technology, University of Macau, Macau, China}


\begin{thebibliography}{}
%
%
\bibitem{CFR}
{Y. Chen}, {G. Filipuk} and {M. N. Rebocho}, {A system of nonlinear difference equations for recurrence relation coefficients of a modified Jacobi weight},
{https://www.researchgate.net/publication/357014786}, {preprint}, {2021}, to appear in {\em Proc. Amer. Math. Soc.}
\bibitem{Chen1997}
{Y. Chen} and {M. E. H. Ismail}, {Ladder operators and differential equations for orthogonal polynomials}, {\em J. Phys. A: Math. Gen.} {\bf 30} ({1997}) {7817--7829}.
\bibitem{ChenIsmail2005}
{Y. Chen} and {M. E. H. Ismail}, {Jacobi polynomials from compatibility conditions}, {\em Proc. Amer. Math. Soc.} {\bf 133} ({2005}) {465--472}.
\bibitem{ChenIts}
{Y. Chen} and {A. Its}, {Painlev\'{e} III and a singular linear statistics in Hermitian random matrix ensembles, I}, {\em J. Approx. Theory} {\bf 162} ({2010}) {270--297}.
\bibitem{Clarkson}
{P. A. Clarkson} and {K. Jordaan}, {The relationship between semiclassical Laguerre polynomials and the fourth Painlev\'{e} equation}, {\em Constr. Approx.} {\bf 39} ({2014}) {223--254}.
\bibitem{Clarkson3}
{P. A. Clarkson}, {K. Jordaan} and {A. Kelil}, {A generalized Freud weight}, {\em Stud. Appl. Math.} {\bf 136} ({2016}) {288--320}.
\bibitem{Dai}
{D. Dai} and {L. Zhang}, {Painlev\'{e} VI and Hankel determinants for the generalized Jacobi weight}, {\em J. Phys. A: Math. Theor.} {\bf 43} ({2010}) {055207 (14pp)}.
\bibitem{Filipuk}
{G. Filipuk}, {W. Van Assche} and {L. Zhang}, {The recurrence coefficients of semi-classical Laguerre polynomials and the fourth Painlev\'{e} equation}, {\em J. Phys. A: Math. Theor.} {\bf 45} ({2012}) {205201 (13 pp)}.
\bibitem{Ismail}
{M. E. H. Ismail}, {\em Classical and Quantum Orthogonal Polynomials in One Variable}, {Encyclopedia of Mathematics and its Applications 98}, {Cambridge University Press}, {Cambridge}, {2005}.
\bibitem{Magnus}
{A. P. Magnus}, {Painlev\'{e}-type differential equations for the recurrence coefficients of semi-classical orthogonal polynomials}, {\em J. Comput. Appl. Math.} {\bf 57} ({1995}) {215--237}.
\bibitem{Min2021}
{C. Min} and {Y. Chen}, {Differential, difference, and asymptotic relations for Pollaczek-Jacobi type orthogonal polynomials and their Hankel determinants}, {\em Stud. Appl. Math.} {\bf 147} ({2021}) {390--416}.
\bibitem{Min2022}
{C. Min} and {Y. Chen}, {Painlev\'{e} IV, Chazy II, and asymptotics for recurrence coefficients of semi-classical Laguerre polynomials and their Hankel determinants}, {arXiv: 2208.05883}, {preprint}, {2022}.
\bibitem{Szego}
{G. Szeg\"{o}}, {\em Orthogonal Polynomials}, {4th edn.}, {Amer. Math. Soc.}, {Providence, RI}, {1975}.
\bibitem{VanAssche}
{W. Van Assche}, {\em Orthogonal Polynomials and Painlev\'{e} Equations}, {Australian Mathematical Society Lecture Series 27}, {Cambridge University Press}, {Cambridge}, {2018}.
\end{thebibliography}
\end{document}